\newtheorem{thm}{Theorem}[section]
\newtheorem{cor}[thm]{Corollary}
\newtheorem{prop}[thm]{Proposition}
\theoremstyle{remark}
\newtheorem{rmk}[thm]{Remark}
\newtheorem*{rmk*}{Remark}
\theoremstyle{definition}
\newtheorem{dfn}[thm]{Definition}
\numberwithin{equation}{section}
\title{Medial Axis detects non-Lipschitz Normally Embedded points}
\author{Adam Białożyt}
\date{May 2024}
\begin{document}
\begin{abstract}
This article demonstrates that every point where a closed set $X\subset \mathbb{R}^n$ is not Lipschitz Normally Embedded is approached by the medial axis of $X$.
\end{abstract}

\maketitle

\section{Introduction}

Inspired by the methods of Kosiba \cite{Kosiba} and Wolter \cite{Wolter}:

\subsection{Lipschitz Normal Embedding}
Consider a path-connected closed set $X\in \mathbb{R}^n$. We can introduce a metric on $X$ in several natural ways. The first approach is to induce the Euclidean metric from $\mathbb{R}^n$. Another one is to define the distance between $x,y \in X$ as the infimum of lengths of all rectifiable paths in $X$ joining $x,y$. We call set $X$ \textit{Lipschitz normally embedded - LNE} (as introduced by L. Birbrair and T. Mostowski \cite{BirbrairMostowski}) if these two metrics are equivalent. We will denote the inner metric as $d_{inn}(x,y)$. For the induced one, we will stick to the $\|x-y\|$ symbol.
We say that a set $X$ is \textit{Lipschitz Normally Embedded at a point }$p\in X$ if there exists $U$ - an open neighbourhood of $p$ such that $U\cap X$ is Lipschitz Normally Embedded. 

The study of Lipschitz Normally Embedded sets is usually undertaken in the context of semi-algebraic or definable sets. However, none of our results demands $X$ to be 'tamed'. Therefore, we present the results without this customary assumption.

\subsection{Medial Axis}
For a closed, nonempty subset $X$ of $\mathbb{R}^n$ endowed with the Euclidean norm, we define the distance of a point $a\in\mathbb{R}^n$ to $X$ by 
$$d(a,X)=d_X(a):= \inf\lbrace \|a-x\| \colon\,x\in X\rbrace,$$
which allows us to introduce the set of closest points in $X$ to $a$ as
$$m_X(a):=\lbrace x\in X\mid d(a,X)=\|a-x\|\rbrace.$$
We will drop the indices of the (multi-)functions $d$ and $m$.

Points where $m(a)$ consists of more than one point are conveniently collected in the \textit{medial axis} of $X$ denoted by $M_X$. That is:
$$M_X:=\lbrace a\in\mathbb{R}^n|\, \# m(a)>1\rbrace.$$
Following from the definition the multifunction $m$ is univalent in $\mathbb{R}^n\backslash M_X$. Moreover, it turns out that the medial axis is precisely the set of discontinuity of $m$ (see \cite{BirbrairDenkowski}). 

\subsection{Premise}
The distance function plays a crucial role in the definitions of LNE and MA. It seems attractive to check if this common characteristic bears any consequences. Non-Lipschitz Normally Embedded points of a set appear somewhat pinched in the ambient space. Such a shape is apocryphally believed to attract the medial axis. Like all statements in mathematics, this one should deserve its rigorous proof as well.

\section{The result}
Let us start by formally stating the definitions used later.
\begin{dfn}[Inner metric]
    Let $X$ be a path-connected closed subset of $\mathbb{R}^n$. For any $x,y$ set $\Gamma_{x,y}$ to be a set of paths joining $x$ and $y$, and let $len(\gamma)$ denote the length of a curve $\gamma$. If every $\Gamma_{x,y}$ contains at least one rectifiable curve we call a function \[d_{inn}: X\times X\ni (x,y) \to d_{inn}(x,y)=\inf \{len(\gamma)\mid \gamma \in \Gamma_{x,y}\}\]
    the \textit{inner metric on $X$}.
\end{dfn}

\begin{dfn}[Lipschitz Normal Embeddings]
    Let $X$ be a set with a properly defined inner metric. We call $X$ to be \textit{Lipschitz Normally Embedded} (or \textit{LNE}) if the inner metric on $X$ and the induced metric are equivalent. Since the inner metric is always bigger or equal to the induced one, $X$ is LNE exactly when there exists a constant $C>0$ such that \[d_{inn}(x,y)<C\|x-y\|.\]
    We call $X$ to be \textit{Lipschitz Normally Embedded at $p\in X$} if one can find $U$ - an open neighbourhood of $p$ such that $X\cap U$ is Lipschitz Normally Embedded.
\end{dfn}

\begin{dfn}[Medial Axis]
    
Let $X$ be a closed, nonempty subset of $\mathbb{R}^n$ endowed with the Euclidean norm, we define the \textit{distance of a point $a\in\mathbb{R}^n$ to $X$} by 
$$d(a,X)=d_X(a):= \inf\lbrace \|a-x\| \colon\,x\in X\rbrace,$$
which allows us to introduce the \textit{ set of closest points in $X$ to $a$} as
$$m_X(a):=\lbrace x\in X\mid d(a,X)=\|a-x\|\rbrace.$$
We will drop the indices of the (multi-)functions $d$ and $m$.

The \textit{medial axis} of $X$ denoted by $M_X$ is the set of points of $\mathbb{R}^n$ admitting more than one closest point in the set $X$, namely
$$M_X:=\lbrace a\in\mathbb{R}^n|\, \# m(a)>1\rbrace.$$
\end{dfn}

The main result of our discussion follows from the Lipschitz nature of the closest points function. The approach presented stems from F-H. Wolter. In \cite{Wolter} Theorem 4.2 he presented a characterisation of medial axis points through the convergence of the distance function gradient. That approach however falls short for our needs. We need the following strengthening of Wolter's result. 

\begin{prop}
 Let $X\subset \mathbb{R}^n$ be closed. Then the function $m(x)$ restricted to $\mathbb{R}^n\backslash \overline{M_X}$ is locally Lipschitz.
\end{prop}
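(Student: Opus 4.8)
The plan is to work locally: it suffices to fix $a_0\in\mathbb{R}^n\setminus\overline{M_X}$ and exhibit a ball around $a_0$ on which $m$ is Lipschitz, and there are three cases. If $a_0$ is interior to $X$, then $m\equiv\mathrm{id}$ near $a_0$ and we are done. If $a_0\in\partial X$, then, the set of points with non-unique nearest point being exactly $M_X$, the reach of $X$ at $a_0$ equals $d(a_0,M_X)=d(a_0,\overline{M_X})>0$; a supporting-ball argument in the style of Federer then gives that $m$ is Lipschitz near $a_0$ — pairs $a,b$ both off $X$ are controlled by external supporting balls of a uniform radius, while mixed or interior pairs are immediate since $\|m(a)-m(b)\|\le\|a-b\|+d(b,X)\le 2\|a-b\|$ whenever $a\in X$. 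So the heart of the matter is the case $a_0\notin X$, say $r_0:=d(a_0,X)>0$ with also $d(a_0,\overline{M_X})>0$, which I treat through the regularity of the distance function. On the open set $\Omega_0:=\mathbb{R}^n\setminus\bigl(X\cup\overline{M_X}\bigr)$ the function $d$ is differentiable, with the classical formula $\nabla d(a)=\bigl(a-m(a)\bigr)/d(a)$, so that $m=\mathrm{id}-d\,\nabla d$ on $\Omega_0$; since $d$ is $1$-Lipschitz and, near $a_0$, stays between $r_0/2$ and $2r_0$, it is enough to show that $d$ is $C^{1,1}$ on a neighbourhood of $a_0$, i.e.\ that $\nabla d$ is locally Lipschitz.

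The upper (semiconcavity) estimate is easy. For $a$ near $a_0$, with $p=m(a)$, the smooth function $g_a(x):=\|x-p\|$ satisfies $g_a\ge d$ everywhere and $g_a(a)=d(a)$, and its Hessian is $\preceq (2/r_0)\,\mathrm{Id}$ on the relevant region, so $d$ is semiconcave near $a_0$ with a uniform constant.

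The difficult half is the matching lower (semiconvexity) estimate, and this is where $a_0\notin\overline{M_X}$ is used. Given $a$ near $a_0$, with $p=m(a)$ and $\nu_a:=(a-p)/d(a)$, push along the ray to $b:=p+(d(a)+\lambda)\nu_a$: if $\lambda>0$ is small enough that still $m(b)=\{p\}$ (equivalently $d(b)=d(a)+\lambda$), then $h_a(x):=d(b)-\|x-b\|$ satisfies $h_a\le d$ (by $1$-Lipschitzness of $d$), $h_a(a)=d(a)$, and its Hessian near $a$ is $\succeq -(2/\lambda)\,\mathrm{Id}$. Everything hinges on finding one $\lambda=\lambda_0>0$ that works for all $a$ in a fixed ball $V\ni a_0$. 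I would get this by compactness and contradiction: otherwise there are $a_k\to a_0$ and $s_k$ with $s_k-d(a_k)\to 0^+$ and $m\bigl(p_k+s_k\nu_{a_k}\bigr)\neq\{p_k\}$, so the ``first-failure'' points $c_k:=p_k+s_k\nu_{a_k}$ converge to $a_0$; but each $c_k$ must lie in $\overline{M_X}$ — at once if $\#m(c_k)>1$, and otherwise because along a normal ray the single-valued nearest-point map can cease to be locally constant only at a point of $\overline{M_X}$ (that is, the cut locus of $X$ is contained in $\overline{M_X}$), which one reads off from the description of $\overline{M_X}$ as the discontinuity locus of $m$ recalled after \cite{BirbrairDenkowski}. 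This would put $a_0\in\overline{M_X}$, a contradiction. I expect this step — the existence of a uniform one-sided ``non-cut'' distance $\lambda_0$, equivalently that points off $\overline{M_X}$ stay a definite distance from the cut locus — to be the main obstacle.

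With the uniform $\lambda_0$ in hand, $d$ is both $C$-semiconcave and $C$-semiconvex on a ball around $a_0$ for a common $C$; since then $d+\tfrac{C}{2}\|\cdot\|^2$ and $\tfrac{C}{2}\|\cdot\|^2-d$ are both convex, one gets $0\preceq D^2 d+C\,\mathrm{Id}\preceq 2C\,\mathrm{Id}$, hence $d\in C^{1,1}$ locally and $\nabla d$ Lipschitz near $a_0$; then $m=\mathrm{id}-d\,\nabla d$ is Lipschitz near $a_0$ as a combination of bounded Lipschitz maps, completing the argument in the main case (and the boundary case was handled above). Everything apart from the uniform $\lambda_0$ is routine. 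A slightly more elementary alternative to the $C^{1,1}$ packaging is to feed the uniform external supporting balls of radius $\ge r_0/2+\lambda_0$ directly into Federer's two-point inequality $\langle a-b,\,m(a)-m(b)\rangle\ge(1-\delta/\rho)\,\|m(a)-m(b)\|^2$ and read off the Lipschitz constant from Cauchy–Schwarz.
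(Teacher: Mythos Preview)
Your approach is sound in outline and genuinely different from the paper's. The paper argues by explicit elementary geometry: after pushing $p$ a fixed amount $\delta$ along its normal to $p_\delta$ (so that $m(p_\delta)=m(p)$), it traps $m(q)$ in the lune $B(q,\sqrt{r^2+\varepsilon^2})\setminus B(p_\delta,r+\delta)$ and bounds the diameter of that lune by an explicit multiple of $\varepsilon=\|p-q\|$ via a similar-triangles computation, obtaining the concrete constant $C=2(\delta+\sup_V d(\cdot,X))/\delta$. You instead package everything as $C^{1,1}$ regularity of $d$ away from $X\cup\overline{M_X}$, via matching semiconcave/semiconvex barriers, and then read off Lipschitzness of $m=\mathrm{id}-d\,\nabla d$. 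Your route connects cleanly to the Federer positive-reach machinery and makes the analytic mechanism transparent; the paper's route is more self-contained and yields an explicit Lipschitz constant.

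Both proofs, however, rest on exactly the step you flag as the main obstacle: that one can push a \emph{uniform} distance along the normal ray while keeping the nearest point unchanged. The paper uses this when it asserts, parenthetically, that $m(p_\delta)=m(p)$; you use it to obtain the uniform $\lambda_0$. Your justification of this step has a gap. You write that it ``reads off from the description of $\overline{M_X}$ as the discontinuity locus of $m$,'' but the cited result says that $M_X$ (not $\overline{M_X}$) is the discontinuity set. More importantly, continuity of $m$ at a point does \emph{not} rule out that point being a cut point: at a focal-type cut point $c$ one has $m(c)=\{p\}$, $m$ continuous at $c$, yet $m(c+\varepsilon\nu)\neq p$ with $m(c+\varepsilon\nu)\to p$ as $\varepsilon\to 0^+$ (the paper's own helix example in the subsequent Remark exhibits exactly this behaviour at the origin). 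So ``$m$ continuous at $c$'' does not give ``$m$ locally constant along the normal ray through $c$,'' and your compactness/contradiction argument does not close as written. The statement you need --- that the cut locus is contained in $\overline{M_X}$, equivalently that for $a\notin\overline{M_X}$ one has $m(a+t\nu_a)=m(a)$ for all $t<d(a,\overline{M_X})$ --- is true, but it requires an independent argument (e.g.\ a perpendicular-bisector/supporting-ball construction producing genuine $M_X$ points accumulating at any focal cut point). Once that lemma is in hand, both your $C^{1,1}$ argument and your Federer-inequality alternative go through; note that the paper's geometric proof relies on the very same lemma, though it does not spell this out.
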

\begin{proof}
    Take any open, convex and relatively compact $V$ in $\mathbb{R}^n\backslash \overline{M_X}$ such that $\overline{V}\cap \overline{M_X}=\varnothing$. Set $\delta = \frac{1}{2}\inf_{x\in V}d(x,M_X)$. To prove the proposition it suffices to show the existence of a constant $C>0$ such that for any pair of points $p$ and $q$ satisfying $\|p-q\| < \delta$ there is $\|m(p)-m(q)\|<C\|p-q\|$. 
    Indeed, having two points $p,q\in V$ we can divide the segment $[p,q]$ into subsegments $[p_i,p_{i+1}],\,i=0,\ldots,r$ with $p_0=p,\,p_r=q$ and $\|p_i-p_{i+1}\|<\delta$. Then with the constant $C$ as above we can estimate $\|m(p)-m(q)\|\leq \sum \|m(p_i)-m(p_{i+1})\|\leq \sum C\|p_i-p_{i+1}\|= C\|p-q\|$.

    The construction of the constant $C$ is slightly different for points in $X$ and those outside of it. 
    Let us start with the more complicated case and pick $p\in V\backslash X$.
    
    Firstly, let us consider points $q$ such that $\langle p-q,p-m(p)\rangle = 0$. Set $r= \|p-m(p)\|,\;\varepsilon = \|p-q\|$ and observe that $m(q)$ must be a point in $D:= B(q,\sqrt{r^2+\varepsilon^2})\backslash B(p_\delta,\delta+r)$, where $p_\delta$ is a point $p$ pushed away from $X$ by a distance $\delta$ (mind that $m(p_\delta) = m(p)$.) We can find an upper bound for $\|m(p)-m(q)\|$ by considering $\max_{x\in D} d(m(p),x)$. 
    
    It is easy to see that the maximum realising point must lie on the affine plane $A$ spanned by points $m(p),p$ and $q$. It is, in fact, one of the points in the intersection $A\cap  S(q,\sqrt{r^2+\varepsilon^2})\cap S(p_\delta,\delta+r)$ the other being $m(p).$ Denoting this point by $\xi$, we can see that triangles $p_\delta,m(p),\xi$ and $q,m(p),\xi$ are isosceles, and a line passing through points $p_\delta$ and $q$ is an altitude of these triangles. Denote by $s$ the foot of that altitude. 

    The triangles $p_\delta, p,q$ and $p_\delta, s,m(x)$ are similar so 
    \[\|m(p)-s\| = \varepsilon\frac{\delta+r}{\sqrt{\delta^2+\varepsilon^2}}\leq \varepsilon\frac{\delta+r}{\delta}.\]
    Now, setting $C = 2\frac{\delta+\sup_Vd(x,X)}{\delta}$ we can see that
    \[\|m(p)-m(q)\|\leq 2\|m(p)-s\| \leq C\|p-q\|.\]

    Secondly, let us turn to points $q$ such that $\langle p-q,p-m(p)\rangle \neq 0$. We can repeat the reasoning above substituting $p$ with $p'$ -- the projection of $q$ on the affine line $m(p)+\mathbb{R}(p-m(p))$. Indeed in such a case $d(p ',M_X)>\delta$ so $p '$ can be pushed to $p '_\delta$ and $\|p'-q\|\leq\|p-q\|<\delta$. Moreover, since $m(p)=m(p')$, we obtain 
    \[\|m(p)-m(q)\|=\|m(p')-m(q)\|\leq C\|p'-q\|\leq C\|p-q\|.\]

    Lastly, it remains to prove the inequality $\|m(p)-m(q)\|<C\|q-p\|$ for points $p\in X\cap V$ (possibly with some other universal constant). However, it turns out that for $p \in X$ we always have $d(m(p),m(q))\leq 2\|p-q\|$; no need for the additional assumption $q\notin M_X$. Truly, for any $\xi\in m(q)$, the triangle inequality gives \[ \|m(p)- \xi \|\leq \|m(p)-q\|+\|q - \xi\|.\]
    
    Now, since $p\in X$, there is $m(p) = p$ and $\|q-\xi\|\leq \|q-p\|$. 
    That shows \[ \max_{\xi\in m(q)}\|m(p)-\xi\|\leq 2\|p-q\|\] for $p\in X\cap V$ and proves the proposition.
\end{proof}

\begin{rmk}
    We cannot obtain the Lipschitz condition for $m$ in any neighbourhood of a point $x\in M_X$, as the multifunction $m$ ceases to be continuous on $M_X$. Truly, for any such point, there exists at least two distinct $x_1,x_2\in m(x)$ and by following the segments $[x_1,x]$ and $[x_2,x]$ we can check that $\|m(u)-m(v)\|$ fails to converge to zero for $(u,v)\to x$.
\end{rmk}

\begin{rmk}
    We can use the proposition to find points in $\overline{M_X}\backslash M_X$. Consider \[X=\{(\cos(u),\sin(u),u^2)\mid u\in  \mathbb{R}\}.\] Then, for every point $x_t=(0,0,t),\,t\geq 0$, its closest point in $X$ is equal to $m(x_t) = (\cos\sqrt{t},\,\sin\sqrt{t},\,t)$.
    The gradient $\nabla m|_L(x_t)$ (where $L$ denotes the line $\mathbb{R}x_1$) is equal to \[\nabla m|_L(x_t) = (-\frac{1}{2\sqrt{t}}\sin \sqrt{t},\, \frac{1}{2\sqrt{t}}\cos \sqrt{t},\, 1).\] It is unbounded in the neighbourhood of $t=0$, so $m$ fails to be locally Lipschitz there and we conclude that the point $(0,0,0)\in \overline{M_X}.$

    The example can be turned definable when one considers \[X' = \{x^2+y^2=1, z=x^2,y\geq 0,x\geq 0\}.\] However, the calculations for $X'$ become slightly less transparent.
    
\end{rmk}

Since the gradient of the distance function is expressed by a formula $\nabla d (x) = \frac{x-m(x)}{d(x)}$ we obtain the following

\begin{cor}
The gradient of the distance function $d_X$ is locally Lipschitz in $\mathbb{R}^n\backslash(\overline{M_X}\cup X)$.     
\end{cor}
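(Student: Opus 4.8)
The plan is to combine the Proposition with the elementary stability of (locally) Lipschitz maps under the algebraic operations that appear in the identity $\nabla d(x)=\frac{x-m(x)}{d(x)}$, valid on $\mathbb{R}^n\setminus X$. Concretely, $\nabla d$ is a product of the vector-valued map $x\mapsto x-m(x)$ and the scalar map $x\mapsto 1/d(x)$, and I would show each of these is bounded and locally Lipschitz on $\mathbb{R}^n\setminus(\overline{M_X}\cup X)$, and that such a product is again locally Lipschitz.

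First I would fix an arbitrary point $a\in\mathbb{R}^n\setminus(\overline{M_X}\cup X)$ and choose an open, relatively compact neighbourhood $W$ of $a$ with $\overline W\subset\mathbb{R}^n\setminus(\overline{M_X}\cup X)$; after shrinking $W$, the Proposition gives a constant $L$ with $\|m(x)-m(y)\|\le L\|x-y\|$ for $x,y\in W$. Next I would record that $d$ is $1$-Lipschitz on all of $\mathbb{R}^n$, continuous, and strictly positive on $\overline W$ (since $\overline W\cap X=\varnothing$), so by compactness there are $0<c\le R$ with $c\le d(x)\le R$ on $\overline W$. This yields that $x\mapsto 1/d(x)$ is bounded by $1/c$ and Lipschitz on $W$ with constant $1/c^2$, because $|1/d(x)-1/d(y)|=|d(y)-d(x)|/(d(x)d(y))\le\|x-y\|/c^2$; and that $x\mapsto x-m(x)$ is Lipschitz on $W$ with constant $1+L$ (it is the sum of the identity and $-m$) and bounded, since $\|x-m(x)\|=d(x)\le R$.

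Then I would invoke the standard fact that a product $f\cdot g$ of a bounded Lipschitz vector-valued map $f$ and a bounded Lipschitz scalar map $g$ is Lipschitz, via $\|f(x)g(x)-f(y)g(y)\|\le(\sup_{\overline W}\|f\|)\,|g(x)-g(y)|+(\sup_{\overline W}|g|)\,\|f(x)-f(y)\|$. Applying this with $f(x)=x-m(x)$ and $g(x)=1/d(x)$ shows $\nabla d=f\cdot g$ is Lipschitz on $W$ with constant at most $R/c^2+(1+L)/c$. Since $a$ was arbitrary, $\nabla d_X$ is locally Lipschitz on $\mathbb{R}^n\setminus(\overline{M_X}\cup X)$.

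I do not expect a genuine obstacle here: the substantive work has already been done in the Proposition, which upgrades Wolter's gradient-convergence criterion to a Lipschitz estimate for $m$. The only point needing a little care is the uniform lower bound $d\ge c>0$ near $a$, which is precisely where the hypothesis $a\notin X$ is used (on $X$ itself the formula for $\nabla d$ degenerates); the remainder is just bookkeeping of Lipschitz constants.
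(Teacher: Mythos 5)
Your proposal is correct and follows exactly the route the paper intends: the paper states the corollary as an immediate consequence of the formula $\nabla d(x)=\frac{x-m(x)}{d(x)}$ together with the Proposition, leaving the bookkeeping implicit. Your write-up simply supplies the details (local Lipschitz bound for $m$, the lower bound $d\ge c>0$ away from $X$, and the product rule for bounded Lipschitz maps), all of which are sound.
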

The main consequence of the proposition is

\begin{thm}
    Let $X\subset \mathbb{R}^n$ be a closed set with a properly defined inner metric. Then for any $p\in X$ there is $p\in \overline{M_X}$ or $X$ is Lipschitz Normally Embedded in $p$. 
\end{thm}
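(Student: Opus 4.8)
The plan is to prove the contrapositive: if $p \in X$ and $p \notin \overline{M_X}$, then $X$ is Lipschitz Normally Embedded at $p$. So I would start by assuming $p \notin \overline{M_X}$. Since $\overline{M_X}$ is closed and $p$ is not in it, there is a radius $\rho > 0$ with $\overline{B(p,\rho)} \cap \overline{M_X} = \varnothing$. The idea is that on this ball the closest-point multifunction $m$ is actually a genuine (single-valued) map, and by the Proposition it is locally Lipschitz there; the retraction $m$ onto $X$ together with straight segments in $\mathbb{R}^n$ should let us build short paths inside $X$ between nearby points of $X$.

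More concretely: shrink to a smaller ball $U = B(p, \rho/4)$, say, contained together with its closure in the open set $\mathbb{R}^n \setminus \overline{M_X}$ on which $m$ is defined and single-valued. By the Proposition, $m$ is Lipschitz on a convex relatively compact neighbourhood $V$ with $\overline V \cap \overline{M_X} = \varnothing$; let $C$ be the Lipschitz constant there, and arrange $U \subset V$. Now take two points $x, y \in X \cap U$. The straight segment $[x,y]$ lies in $V$ (convexity, after choosing $U$ small enough relative to $V$), and for every $t \in [0,1]$ the point $(1-t)x + ty$ has a unique closest point $m((1-t)x+ty) \in X$. This gives a path $\gamma(t) = m((1-t)x + ty)$ in $X$ joining $x = m(x)$ to $y = m(y)$ (using $m|_X = \mathrm{id}$). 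Since $m$ is $C$-Lipschitz on the segment, $\gamma$ is a rectifiable curve with $\mathrm{len}(\gamma) \le C \|x - y\|$. Hence $d_{inn}(x,y) \le C\|x-y\|$ for all $x,y \in X \cap U$, which is precisely the LNE condition at $p$.

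The step I expect to be the main obstacle — or at least the one requiring the most care — is verifying that the inner metric on the neighbourhood $X \cap U$ is the one that appears in the definition, i.e. that the path $\gamma$ constructed above actually stays inside $X \cap U$ (not merely inside $X$), so that it is admissible for computing $d_{inn}$ \emph{localized at $p$}. This should follow by choosing the radii carefully: if $x, y \in X \cap B(p,r)$ for $r$ small, then the segment $[x,y]$ stays in $B(p, 2r)$, and since $m$ is continuous with $m(p)=p$ one can ensure $\mathrm{len}(\gamma) \le C\cdot 2r$ is small, forcing $\gamma$ to stay inside a prescribed neighbourhood of $p$; a standard nested-ball argument (choose $U$ so that any path of length $\le C\cdot\mathrm{diam}(U)$ starting in $U$ remains in the ball where $m$ is Lipschitz) closes this gap. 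A secondary point to check is rectifiability of $\gamma$: a Lipschitz image of an interval is automatically rectifiable with length controlled by the Lipschitz constant times the length of the interval, so this is immediate once the Lipschitz bound is in hand. With these two points addressed, the estimate $d_{inn}(x,y) \le C\|x-y\|$ is exactly the constant required in the definition of LNE at $p$, completing the proof.
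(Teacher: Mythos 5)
Your proposal is correct and follows essentially the same route as the paper: retract the straight segment $[x,y]$ into $X$ via the (single-valued, Lipschitz) closest-point map $m$ from the Proposition and bound the length of the resulting curve by $C\|x-y\|$; the paper computes that length via the a.e.\ gradient of $m|_I$, while you invoke directly that a Lipschitz image of a segment is rectifiable with controlled length, which is the same estimate. The localization issue you flag (that the curve must stay in a prescribed neighbourhood of $p$) is handled correctly by your nested-ball argument and is essentially what the paper defers to the remark following its proof.
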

\begin{proof}
    Assume $p\notin \overline{M_X}$, then there exists $r>0$ such that $\overline{B(p,r)}\cap \overline{M_X} = \varnothing$. 
    We will prove that there exists $C>0$ such that for all $x,y \in B(p,r)$ we have $d_{inn}(x,y)<C\|x-y\|$.
    
    For any choice of $x,y\in B(p,r)$ set $I$ to be a segment $[x,y]$. We have $I\cap M_X = \varnothing$, so $m(I)$ is a curve in $X$. Moreover $d_{inn}(x,y)\leq len(m(I))$ where $len$ denotes the length of a curve.

    Now, thanks to Proposition we know that $m(x)$ is Lipschitz on $\overline{B(p,r)}$ and its restriction to $I$ is almost everywhere differentiable. Denote by $C$ a constant such that $\|m(x)-m(y)\|<C\|x-y\|$ in $B(p,r)$. Setting $\mathcal{D_I}(m)$ to be the set of differentiability of $m|_I$, we can estimate the length of $m(I)$ as follows
    \[len(m(I)) = \int_{I\cap \mathcal{D_I}(m)} \|\nabla m|_I(t)\| dt\leq C\cdot len(I)  = C\|x-y\|.\]
    Hence 
    \[d_{inn}(x,y)<C\|x-y\|.\]
\end{proof}
\begin{rmk}
    Let $p\in X$ and $B(p,r)$ be separated from $M_X$. From the proof of the previous theorem we can see that points $x,y\in B(p,r)\cap X$ can be linked with at least one rectifiable curve from $\Gamma_{x,y}$. Therefore, the inner metric on $X$ can be defined locally around $p$.
\end{rmk}

\begin{rmk}
    However tempting the full equivalence ($p\notin \overline{M_X}$) $\iff$ ($X$ is LNE in $p$) is, we cannot expect it. The horn-like set $X = \{(x,y,z)| z=\sqrt{x^2+y^2}\}$ shatters our hopes. It is Lipschitz Normally Embedded yet $0 \in \overline{M_X}\cap X.$   
\end{rmk}

\section{Further Study}
It remains to check if we can always generate the sequence of points in $X$ that proves its nLNE by taking $x_n,y_n\in m(\xi_n)$ where $\xi_n\in M_X$. It seems plausible...

\bibliographystyle{abbrv}
\bibliography{references1}

\end{document}